\DeclareMathOperator{\hgt}{ht}
\renewcommand{\char}{\text{char}}
\newcommand{\mb}{\mathbb}
\renewcommand{\frak}{\mathfrak}
\newcommand{\la}{\langle}
\newcommand{\ra}{\rangle}
\newcommand{\sseq}{\subseteq}
\newcommand{\ses}[3]{
  \[\begin{CD}
    0 @>>> #1 @>>> #2 @>>> #3 @>>> 0
  \end{CD}\]
}
\newcommand{\m}{\frak{m}}
\newcommand{\NN}{\mb{N}}
\newtheorem{thm}{Theorem}[section]
\newtheorem{lem}[thm]{Lemma}
\theoremstyle{remark}
\newtheorem{exa}[thm]{Example}
\newtheorem{quest}[thm]{Question}
\theoremstyle{definition}
\newtheorem{defn}[thm]{Definition}
\newcommand{\frob}[1]{F^{#1}_*}
\newcommand{\fpow}[1]{^{[p^{#1}]}}
\DeclareMathOperator{\sdim}{sdim}
\title{Relating F-signature and F-splitting Ratio of Pairs\\Using Left-Derivatives}
\author{Eric Canton}
\address{Department of Mathematics\\ University of Nebraska - Lincoln\\ Lincoln,  NE 68588}
\email{ecanton2@math.unl.edu}
\keywords{F-signature, F-splitting ratio, F-pure, $p$-fractals, syzygy gaps.}
\begin{document}
\maketitle
\begin{abstract}
We first relate an approximate $n^{th}$-order left derivative of $s(R, f^t)$ at the F-pure threshold $c$ to the F-splitting ratio 
$r_F(R, f^c)$. Next, we apply the methods developed by Monsky and Teixeira in their investigation of syzygy gaps and $p$-fractals to obtain 
uniform convergence of the F-signature when $f$ is a product of distinct linear polynomials in two variables. Finally, we explicitly compute the F-signature function for several examples using Macaulay2 code outlined in the last section of this paper.
\end{abstract}

\section{Introduction}
Let $(R, \m, k)$ be a local ring containing a field $k$, and assume $\char(R) = p > 0$. For simplicity we assume that $k = k^p$ ($k$ is perfect); 
this is always the case when our field is finite. For an ideal $I \sseq R$ we denote by $I\fpow{e} = \{\sum_1^n c_i r_i^{p^e} : c_i \in R 
\text{ and }r_i \in I\}$ the ideal generated by all $p^e$-th powers of elements in $I$. Because $\char(R)$ is positive this is in general an 
ideal distinct from the normal $p^e$-th power of the ideal.

The {\bf Frobenius endomorphism} of $R$ is the map $F: R \to R$ which takes $r \mapsto r^p$. We will write $R$ for the domain of this map and 
$\frob{}R$ for the codomain, when it is important to distinguish the two (although they are isomorphic as abelian groups). We define an $R$-module
structure on $\frob{}R$ by $r.x = r^px$ for $r \in R$ and $x \in \frob{}R$. A reduced ring $R$ is {\bf F-finite} if $\frob{}R$ is finitely 
generated as an $R$-module. Similarly, we can consider the $e$-th iterated Frobenius map $F^e: R \to R$ and define an $R$-module structure for 
this iterated Frobenius map; the codomain is denoted here $\frob{e}R$. Note that if $R$ is F-finite, then $\frob{e}R$ is finitely generated for 
all $e \in \NN$.

Important classes of F-finite rings include polynomial rings over a field in positive characteristic, quotients, and localizations of these rings.
For example consider $S = k[x_1, \dots, x_n]$. Because $\frob{}S$ is generated as an $S$-module by the products $\Pi_{i=1}^n x_i^{d_i}$ with each 
$d_i \le p-1$, we conclude that $S$ is F-finite. Similarly, when $R$ is an F-finite ring and $I \sseq R$ is an ideal, then $\frob{}(R/I)$ is 
generated over $R/I$ by the images of the generators of $\frob{}R$ over $R$. Thus any quotient of an F-finite ring is also F-finite. We also have 
that localizations of F-finite rings are again F-finite (see R. Fedder, Lemmas 1.4 and 1.5 from \cite{Fedd83} for more information.)

When $R$ is reduced, we naturally identify $\frob{e}R$ with the ring $R^{1/p^e}$ of $p^e$-th roots of elements of $R$ by sending 
$r \mapsto r^{1/p^e}$. Next, we will decompose this module as $R^{1/p^e} = R^{a}\oplus M$ where $M$ has no free summands of $R$. We define $a_e$ to 
be the maximal rank of any free decomposition of $R^{1/p^e}$. A famous result of E. Kunz (which we refer to as Kunz's Theorem) states that for all 
$p^e$, we have that $a_e \le p^{ed}$ with equality if and only if $R$ is regular if and only if the Frobenius map is flat. This result prompted 
C. Huneke and G. Leuschke to define \cite{HL04} the F-signature as the following limit.

\begin{defn} Let $(R, \m, k)$ be an F-finite, $d$-dimensional reduced local ring and $k = R/\m$ a field of positive characteristic $p$. The 
{\bf F-signature} of $R$ is
\[
  s(R) := \lim_{e \to \infty} \frac{a_e}{p^{ed}}
\]
\end{defn}

Recently, K. Tucker showed that this limit exists in full generality \cite{FsigExists}. It is 1 if and only if $R$ is regular, and so the 
F-signature serves as a measure to which $R$ fails to be regular in comparison to other local rings of the same dimension. If for some $e$ (and 
thus every $e$) we have that $a_e \ne 0$ then we say that $R$ is {\bf F-pure}. Often it happens that the limit $s(R)$ is zero. I. Aberbach and 
F. Enescu defined \cite{AE03} the {\bf Frobenius splitting dimension} (F-splitting dimension) $\sdim(R) = m$ to be the greatest integer such that 
\begin{align*}
  \lim_{e \to \infty} \frac{a_e}{p^{em}}
\end{align*}
is greater than zero. This limit exists by \cite{FsigExists}. The corresponding limit is defined in \cite{AE03} to be the 
{\bf Frobenius splitting ratio} (F-splitting ratio) denoted as $r_F(R)$. Note that if the splitting dimension $\sdim(R) = \dim(R)$, then 
$r_F(R) = s(R)$.

In \cite{BST11} M. Blickle, K. Schwede, and K. Tucker introduce the concept of F-signature to the pair $(R, f^t)$, where $t \in [0, \infty)$ is a 
real number and $f \in R$ is nonzero. 
\begin{defn}
  Let $f\in R$ be a nonzero element in an F-finite regular local ring $(R, \m, k)$. Let $d = \dim(R)$ be the Krull dimension of $R$, and 
  $t \in [0, \infty)$ a positive real number. The {\bf F-signature} of the pair $(R, f^t)$ is defined to be the limit
  \[
     s(R, f^t) := \lim_{e\to\infty} \frac{1}{p^{ed}}\ell_R\left(\frac{R}{\m\fpow{e}:f^{\lceil t(p^e-1) \rceil}}\right)
  \]
\end{defn}
The supremum over all $t$ such that $s(R, f^t)$ is F-pure is called the {\bf F-pure threshold} of $f$ and will be denoted in this paper
as FPT$(f)$.

An important theorem regarding computation of the F-signature when $t = a/p^s$ is Proposition 4.1 found in \cite{BST112}, which states that 
\[
   s(R, f^{a/p^s}) = \frac{1}{p^{sd}}\ell_R\left(\frac{R}{\m^{[p^s]}:f^a}\right)
 \]
which is to say that in this case, we do not need to take a limit: a single length suffices to compute $s(R, f^{a/p^s})$.

In much the same way, we may now define the {\bf F-splitting dimension} of the pair $(R, f^t)$, where $t$ is a rational number whose denominator 
is not divisible by $p$, to be the greatest integer $m$ such that 
\[
  \limsup_{e\to\infty} \frac{1}{p^{em}}\ell_R\left(\frac{R}{\m\fpow{e}: f ^{\lceil t(p^e-1)\rceil}}\right)
\]
is nonzero. We again define this limit to be the {\bf Frobenius splitting ratio} (F-splitting ratio) of the pair $(R, f^t)$, denoted here 
$r_F(R, f^t)$. The first result in this paper relates an approximate higher-order left derivative of $s(R, f^t)$ at $c = FPT(f)$ to a constant 
multiple of $r_F(R, f^c)$ when $p$ does not divide the denominator of $c$. This is a generalization of Theorem 2.1 in the next section, which can 
be found in \cite{BST112} as Theorem 4.2 relating the first left derivative $D_-s(R, f^1)$ to the F-signature $s(R/\la f \ra)$.

Computing the F-splitting dimension is aided by formation of a special ideal, called the {\bf splitting prime} of $(R, f^t)$. This ideal is defined
 to be the maximal ideal $J \sseq R$ such that $f^{\lceil t(p^e-1)\rceil}J \sseq J\fpow{e}$ for all $e > 0$. As the name suggests, when it is a proper ideal of $R$ it is a prime ideal. This result can be found in (\cite{Sch08}, Corollary 6.4) but it is presented again here for the reader's convenience, albeit with a different proof.

\begin{lem}\label{splittingPrime} Let $(R, \m, k)$ be an F-finite regular local ring, and $f \in R$ a nonzero element of $R$. 
  Take $t \in [0, \infty)$ and let $P$ be the splitting prime of the pair $(R, f^t)$. If $P$ is proper, then it is a prime ideal.\end{lem}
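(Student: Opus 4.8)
The plan is to first recall why $P$ is well defined as a unique maximal \emph{proper} ideal with the stated property, and then to exploit this maximality together with the flatness of Frobenius to run a colon-ideal argument. Writing $n_e = \lceil t(p^e-1)\rceil$, call an ideal $J \sseq R$ \emph{$t$-compatible} if $f^{n_e} J \sseq J\fpow{e}$ for every $e > 0$. Since $(J_1 + J_2)\fpow{e} = J_1\fpow{e} + J_2\fpow{e}$, the sum of two $t$-compatible ideals is again $t$-compatible. The whole ring $R$ is trivially $t$-compatible, so the constraint that matters is properness, and here the local structure helps: any proper ideal is contained in $\m$, so a sum of proper $t$-compatible ideals stays inside $\m$ and hence remains proper. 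As $R$ is Noetherian, the a priori infinite sum of all proper $t$-compatible ideals reduces to a finite such sum, so it is itself proper and $t$-compatible. This sum is $P$, the unique maximal proper $t$-compatible ideal, and it is precisely this maximality that I will use.

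Now assume $P$ is proper and suppose $xy \in P$ with $x \notin P$; I must show $y \in P$. The idea is to consider the colon ideal $(P :_R x) = \{r \in R : rx \in P\}$. It visibly contains both $P$ and $y$, and it is proper, since $(P :_R x) = R$ would force $1 \cdot x = x \in P$, contrary to assumption. Thus it suffices to prove that $(P :_R x)$ is itself $t$-compatible: maximality of $P$ then forces $(P :_R x) \sseq P$, whence $(P :_R x) = P$ and in particular $y \in P$. This reduces the whole lemma to the single containment $f^{n_e}(P :_R x) \sseq (P :_R x)\fpow{e}$ for all $e > 0$.

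To verify this containment I will use that $R$ is regular, so by Kunz's theorem the Frobenius is flat. Flatness supplies the colon formula $(P :_R x)\fpow{e} = (P\fpow{e} :_R x^{p^e})$; the nontrivial inclusion is $(P\fpow{e} :_R x^{p^e}) \sseq (P :_R x)\fpow{e}$, which is exactly where flatness enters and which I expect to be the one step needing genuine care to make airtight. Granting it, I take $r \in (P :_R x)$, so $rx \in P$. Since $P$ is $t$-compatible, $f^{n_e}(rx) \in f^{n_e}P \sseq P\fpow{e}$; multiplying by $x^{p^e-1}$ yields $f^{n_e} r\, x^{p^e} \in x^{p^e-1}P\fpow{e} \sseq P\fpow{e}$. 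Hence $f^{n_e} r \in (P\fpow{e} :_R x^{p^e}) = (P :_R x)\fpow{e}$, which is the desired containment, so $(P :_R x)$ is $t$-compatible and the argument closes. The only real content beyond bookkeeping with the ceiling exponents $n_e$ is the flat-base-change colon identity; everything else is formal manipulation of bracket powers and the maximality of $P$.
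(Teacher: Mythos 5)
Your proof is correct and follows essentially the same route as the paper's: the same bracket-power manipulation showing $f^{\lceil t(p^e-1)\rceil}(P:x) \sseq (P\fpow{e}:x^{p^e})$, the same appeal to Kunz's theorem and flatness of Frobenius to identify $(P\fpow{e}:x^{p^e})$ with $(P:x)\fpow{e}$, and the same use of maximality of $P$ to conclude $(P:x)=P$. If anything, your version is slightly more careful than the paper's, since you explicitly verify that $(P:x)$ is proper (and that $P$ itself is well defined as the unique maximal proper compatible ideal), a point the paper's maximality phrasing glosses over.
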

\begin{proof}
  Suppose $P \ne R$ is the splitting prime of $(R, f^t)$ and let $c \in R \setminus P$. We wish to show that $(P:c) = P$ which implies that $P$ 
  is a prime ideal.

  I claim that if $J$ is an ideal of $R$ satisfying $f^{\lceil t(p^e-1)\rceil}J \sseq J\fpow{e}$, then for any $r \in R \setminus J$ we also have that
  $f^{\lceil t(p^e-1)\rceil}(J:r) \sseq (J\fpow{e}: r^{p^e})$. To see this, let $g \in (J:r)$ so that $gr \in J$. Then $f^{\lceil t(p^e-1)\rceil}gr \in
  J\fpow{e}$ by the assumption we made on $J$. Then of course $f^{\lceil t(p^e-1)\rceil}gr^{p^e} \in J\fpow{e}$, and so $f^{\lceil t(p^e-1)\rceil}g \in 
  (J\fpow{e}:r^{p^e})$, establishing the claim.

  By Kunz's theorem, we know that the Frobenius endomorphism on $R$ is flat when $R$ is regular, so we may tensor the exact sequence
  \ses{R/(J:r)}{R/J}{R/(J+r)} with $\frob{e}R$ to conclude that in this case $(J\fpow{e}:r^{p^e}) = (J:r)\fpow{e}$. Therefore, whenever $J$ is an 
  ideal satisfying $f^{\lceil t(p^e-1)\rceil}J\sseq J\fpow{e}$ and $r \in R \setminus J$ is any element, we have that 
  $f^{\lceil t(p^e-1)\rceil}(J:r)\sseq (J:r)\fpow{e}$.
  
  Because $P \ne R$ is the splitting prime for the pair $(R, f^t)$, it is contained in no other ideal satisfying $f^{\lceil t(p^e-1)\rceil}J
  \sseq J\fpow{e}$. This implies that $(P:c) = P$ and so $P$ is prime.
\end{proof}
The splitting prime is related to the F-splitting dimension $\sdim(R, f^t)$ by the following theorem, which can be found in \cite{BST11}.

\begin{thm}[\cite{BST11} Theorem 4.2] Let $(R, \m, k)$ be an F-finite $d$-dimensional regular local ring, $f \in R$ a nonzero element, 
  $t \in [0, \infty)$ and let $P$ be the splitting prime of the pair $(R, f^t)$. Then
  \[
  \sdim(R, f^t) = \dim(R/P)
  \]
\end{thm}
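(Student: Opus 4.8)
The plan is to show that the length sequence defining $\sdim(R,f^t)$ grows exactly like $p^{e\dim(R/P)}$, by bounding it above and below. Write $N_e = \lceil t(p^e-1)\rceil$, $I_e = \m\fpow{e}:f^{N_e}$, and $b_e = \ell_R(R/I_e)$, so that $\sdim(R,f^t)$ is by definition the largest integer $m$ with $\limsup_e b_e/p^{em} > 0$. The first step is to observe that the splitting prime lies inside every $I_e$: for $x \in P$ we have $f^{N_e}x \in P\fpow{e} \sseq \m\fpow{e}$ (using $P \sseq \m$), so $x \in I_e$; and trivially $\m\fpow{e} \sseq I_e$ as well. Hence each $R/I_e$ is a module over $\bar R := R/P$, and writing $\bar\m = \m/P$ and $\bar f$ for the image of $f$, we have $b_e = \ell_{\bar R}(\bar R/\bar I_e)$ together with the sandwich $\bar\m\fpow{e} \sseq \bar I_e \sseq (\bar\m\fpow{e}:\bar f^{N_e})$. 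This reduces the whole problem to estimating Hilbert--Kunz-type lengths of $\bar R$.

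For the upper bound, the left-hand containment $\bar\m\fpow{e} \sseq \bar I_e$ gives $b_e \le \ell_{\bar R}(\bar R/\bar\m\fpow{e})$, which is precisely the Hilbert--Kunz function of the local ring $\bar R$ of dimension $\dim(R/P)$. This length grows like $e_{HK}(\bar R)\,p^{e\dim(R/P)}$, so $b_e/p^{em} \to 0$ for every $m > \dim(R/P)$, and therefore $\sdim(R,f^t) \le \dim(R/P)$. (When $P = R$ the pair is not F-pure, every $b_e$ vanishes, and the statement is degenerate; I would dispose of this case at the outset.)

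The reverse inequality $\sdim(R,f^t) \ge \dim(R/P)$ is the substantive direction and I expect it to be the main obstacle. From the right-hand containment $\bar I_e \sseq (\bar\m\fpow{e}:\bar f^{N_e})$ one gets $b_e \ge \ell_{\bar R}(\bar R/(\bar\m\fpow{e}:\bar f^{N_e}))$, so the task is to show that this length — the $e$-th splitting number of the pair $(\bar R,\bar f^t)$ — does not decay below order $p^{e\dim(R/P)}$; equivalently, that the F-splitting ratio $r_F(R,f^t)$ is strictly positive. My plan is to argue by contradiction using the maximality of $P$: if $\sdim(R,f^t)$ were strictly smaller than $\dim(R/P)$, then the number of pair-splittings of $\frob{e}R$ would grow too slowly to account for the full transverse dimension of $V(P)$, and I would use this deficiency to produce an ideal $J$ with $P \ssneq J$ still satisfying $f^{N_e}J \sseq J\fpow{e}$ for every $e$, contradicting that $P$ is the largest such ideal. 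The delicate point, and the reason the lower bound is hard, is exactly this passage from slow growth of $b_e$ to an enlargement of the splitting prime: because $\bar R = R/P$ need not be regular, Frobenius no longer commutes with the colon operation, so splitting data cannot be transported naively between $R$ and $\bar R$, and making the argument rigorous requires the uniform control on the sequence $b_e$ furnished by the F-signature existence results (as in Tucker's proof that $s(R,f^t)$ exists).
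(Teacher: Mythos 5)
A preliminary remark: the paper itself gives no proof of this statement --- it is quoted from \cite{BST11} as a known theorem --- so your proposal can only be judged on its own merits, not against an argument in the text. Your upper bound is correct and complete: the containment $P + \m\fpow{e} \sseq I_e$ holds exactly as you argue, so $b_e \le \ell_{\bar R}(\bar R/\bar\m\fpow{e})$ with $\bar R = R/P$, and Monsky's theorem on Hilbert--Kunz functions caps this by $Cp^{e\dim(R/P)}$, giving $\sdim(R,f^t) \le \dim(R/P)$.

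The lower bound, however, is a genuine gap, and both devices you propose for it fail. First, the inequality $b_e \ge \ell_{\bar R}\bigl(\bar R/(\bar\m\fpow{e}:\bar f^{N_e})\bigr)$ can be vacuous: nothing prevents $f^{N_e} \in \m\fpow{e} + P$, in which case that colon is the unit ideal and the bound reads $b_e \ge 0$. This happens already in the simplest examples: take $R = k[[x,y]]$, $f = xy$, $t = 1$. Then $N_e = p^e - 1$, $I_e = \m\fpow{e}:(xy)^{p^e-1} = \m$, so $P = \m$, $b_e = 1$, and the theorem holds with $\sdim = 0 = \dim(R/P)$; but $\bar R = k$ and $\bar f = 0$, so your colon bound detects nothing. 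Relatedly, calling that colon length ``the $e$-th splitting number of $(\bar R, \bar f^t)$'' is unjustified: $\bar R$ is not regular, so Fedder-type colons do not compute splitting numbers there --- a point you yourself raise at the end. Second, your fallback plan --- that slow growth of $b_e$ should ``produce'' an ideal $J$ with $P \ssneq J$ and $f^{N_e}J \sseq J\fpow{e}$ for all $e$ --- names no construction of $J$ whatsoever; this passage is not a reduction of the hard direction, it \emph{is} the hard direction. The actual argument of \cite{BST11} runs differently: since $P$ is compatible under every map $\phi(\frob{e}(f^{N_e}\cdot{-}))$ with $\phi \in \hom_R(\frob{e}R, R)$, all such maps descend to $R/P$; by maximality of $P$ the descended pair on $R/P$ has zero splitting prime; and the substantive input is a positivity theorem --- a pair (in their Cartier-algebra sense) whose splitting prime is zero has strictly positive F-splitting ratio --- established with the convergence machinery of Aberbach--Enescu and Tucker. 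The existence results you invoke (that the limits defining $s$ exist) do not by themselves yield this positivity, so the key idea is missing from your outline.
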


In the third section of this paper, we provide an application of Monsky and Teixeira's work on $p$-fractals to compute the F-signature when $f$ is
a homogeneous polynomial in two variables. Specifically, we prove that $s(R, f^t)$ converges uniformly to a quadratic polynomial in $t$ as 
$p \to \infty$. We provide an example where we can explicitly compute the F-pure threshold for $f = xy(x+y)$ and apply the $p$-fractal techniques 
mentioned before to compute the left derivative at the F-pure threshold. 

Finally, several computational examples (using Macaulay2) are included in the second-to-last section of this paper; algorithms that were used to 
compute these examples comprise the final section.

{\em Special thanks to Karl Schwede for many insightful and encouraging discussions over the course of this work and preparation of this document.
 In particular, the proofs of \ref{splittingPrime} and \ref{DerivativeThm} were discussed with him. I would like to thank Kevin Tucker for a 
stimulating discussion of some of the results presented here and his suggestions regarding this paper, and I would also like to thank Florian 
Enescu, who suggested applying the results of Teixeira's thesis to F-signature in two variables. Thanks to Wenliang Zhang and Lance Miller for 
their useful critiques of this paper.}


\section{F-Splitting Ratio of Principal Ideals}
In this section, we assume that $R$ is an F-finite regular local {\em domain} with Krull dimension $d$. The F-signature of the pair $(R, f^t)$ was
 shown recently in \cite{BST112} to be continuous and convex on $[0, \infty)$, thus differentiable almost everywhere on the domain $[0, \infty)$. 
Indeed, the authors of \cite{BST112} proved the following theorem:

\begin{thm}[\cite{BST112} Theorem 4.3] If $(R, \m, k)$ is an F-finite $d$-dimensional local domain and $f\in R$ is a nonzero element, then
\begin{align*}
  &D_-s(R, f^1) = -s(R/\la f \ra)   &D_+s(R, f^0) = -e_{HK}(R/\la f\ra)
\end{align*}
\end{thm}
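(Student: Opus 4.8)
The plan is to compute each one-sided derivative along a sequence of rational exponents with $p$-power denominators, on which Proposition~4.1 of \cite{BST112} replaces the defining limit of $s(R,f^t)$ by a single colon-length. Because $s(R,f^t)$ is convex on $[0,\infty)$ by \cite{BST112}, its one-sided derivatives exist at every point, so the difference quotients along \emph{any} sequence decreasing to $0$ (respectively increasing to $1$) converge to $D_+s(R,f^0)$ (respectively $D_-s(R,f^1)$); this lets me choose the convenient exponents $t_s=1/p^s$ and $t_s=1-1/p^s$. I will also record, from Proposition~4.1 applied with $a=p^s$, that $s(R,f^1)=p^{-sd}\ell_R(R/(\m\fpow{s}:f^{p^s}))=0$, since $f\in\m$ forces $f^{p^s}\in\m\fpow{s}$.

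For the right derivative, take $t_s=1/p^s$. Proposition~4.1 gives $s(R,f^{1/p^s})=p^{-sd}\ell_R(R/(\m\fpow{s}:f))$ and $s(R,f^0)=p^{-sd}\ell_R(R/\m\fpow{s})=1$, so subtracting (using $\m\fpow{s}\sseq(\m\fpow{s}:f)$) yields
\[
  1-s(R,f^{1/p^s})=\frac{1}{p^{sd}}\,\ell_R\!\left(\frac{(\m\fpow{s}:f)}{\m\fpow{s}}\right).
\]
The module on the right is the kernel of multiplication by $f$ on $R/\m\fpow{s}$; since kernel and cokernel of an endomorphism of a finite-length module have equal length, it equals $p^{-sd}\ell_R(R/(\m\fpow{s}+\la f\ra))$. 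Writing $S=R/\la f\ra$ with maximal ideal $\bar\m$, this last length is the Hilbert--Kunz function $\ell_S(S/\bar\m\fpow{s})$. Dividing the difference quotient by $t_s=1/p^s$ therefore gives $p^{-s(d-1)}\ell_S(S/\bar\m\fpow{s})$, and since $\dim S=d-1$ its limit is $e_{HK}(S)$. Hence $D_+s(R,f^0)=-e_{HK}(R/\la f\ra)$.

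For the left derivative, take $t_s=1-1/p^s$, so that Proposition~4.1 gives $s(R,f^{1-1/p^s})=p^{-sd}\ell_R(R/(\m\fpow{s}:f^{p^s-1}))$. The crux is the Fedder-type identity $\ell_R(R/(\m\fpow{s}:f^{p^s-1}))=a_s(S)$, the $s$-th free rank of $\frob{s}S$. To prove it I would apply Fedder's criterion \cite{Fedd83} to identify $\hom_S(\frob{s}S,S)$ with $(\la f\ra\fpow{s}:\la f\ra)/\la f\ra\fpow{s}=\la f^{p^s-1}\ra/\la f^{p^s}\ra$, a cyclic $\frob{s}S$-module whose generator sends $\bar x\mapsto\overline{\Phi(f^{p^s-1}x)}$ for a fixed generating trace $\Phi\colon\frob{s}R\onto R$ of $\hom_R(\frob{s}R,R)$. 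Using the standard splitting-ideal description of $a_s$, an element $\bar u\in S$ lies in the non-free ideal exactly when $\Phi(f^{p^s-1}uv)\in\m$ for all $v\in R$; since $\frob{s}R$ is free over the regular ring $R$, the premultiplication map $(f^{p^s-1}u)\cdot\Phi$ has image in $\m$ precisely when $f^{p^s-1}u\in\m\fpow{s}$, i.e. $u\in(\m\fpow{s}:f^{p^s-1})$. As $f^{p^s}\in\m\fpow{s}$ gives $\la f\ra\sseq(\m\fpow{s}:f^{p^s-1})$, the non-free ideal of $S$ is $(\m\fpow{s}:f^{p^s-1})/\la f\ra$, and passing to the quotient proves the identity. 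Combined with $s(R,f^1)=0$, the difference quotient equals $-p^{-s(d-1)}a_s(S)$, whose limit is $-s(S)=-s(R/\la f\ra)$.

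The main obstacle is this Fedder identity: one must correctly pin down the cyclic generator of $\hom_S(\frob{s}S,S)$ as premultiplication by $f^{p^s-1}$ and translate ``generates a free summand of $\frob{s}S$'' into the colon condition over $R$ through the regular-ring fact that $v\cdot\Phi$ splits iff $v\notin\m\fpow{s}$. By contrast, the right-derivative computation and the reduction to the sequences $t_s$ are routine consequences of the single-length formula and convexity.
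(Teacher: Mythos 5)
The paper itself gives no proof of this statement: it is quoted as background from \cite{BST112}, so there is no internal argument to compare yours against. Judged on its own, your proof is correct, and it is essentially the original Blickle--Schwede--Tucker argument. The reduction to the sequences $t_s=1/p^s$ and $t_s=1-1/p^s$ via convexity is legitimate (one-sided difference quotients of a convex function are monotone in $t$, so the one-sided limits exist and are detected by any sequence), and Proposition 4.1 then replaces each value of $s(R,f^{t_s})$ by a single colon length. For the right derivative, the kernel--cokernel length identity correctly converts $\ell_R\bigl((\m\fpow{s}:f)/\m\fpow{s}\bigr)$ into $\ell_R\bigl(R/(\m\fpow{s}+\la f\ra)\bigr)$, which is the Hilbert--Kunz function of $R/\la f\ra$, a ring of dimension $d-1$. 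For the left derivative, your Fedder-type identity $\ell_R\bigl(R/(\m\fpow{s}:f^{p^s-1})\bigr)=a_s(R/\la f\ra)$ is the crux and your sketch of it is sound: $(\la f^{p^s}\ra:\la f\ra)=\la f^{p^s-1}\ra$ by cancellation in a domain, $\hom_S(\frob{s}S,S)$ is cyclic over $\frob{s}S$ with the generator you describe, and a premultiple $\Phi(\frob{s}(w\,\cdot\,))$ fails to be surjective exactly when $w\in\m\fpow{s}$, by Kunz plus the monomial-basis description of the Frobenius trace on a regular local ring. (Your use of the splitting-ideal formula $a_s(S)=\ell_S(S/I_s(S))$ quietly uses that $k$ is perfect, which the paper assumes throughout; and when $f$ is not square-free both sides of your identity correctly vanish for $s\gg 0$, so no reducedness of $R/\la f\ra$ is secretly needed.)

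One caveat, which is a defect of the paper's quotation rather than of your argument: as printed, the hypothesis reads ``F-finite $d$-dimensional local domain,'' while your proof --- like Proposition 4.1, Kunz's theorem, and Fedder's criterion on which it rests --- requires $R$ to be \emph{regular}. Regularity is in fact the correct hypothesis: for an F-finite domain that is not strongly F-regular one has $s(R)=0$, hence $s(R,f^t)\le s(R,f^0)=0$ for all $t$, so $D_+s(R,f^0)=0$, whereas $e_{HK}(R/\la f\ra)\ge 1$; thus the statement is false in the stated generality. Since the paper only ever defines $s(R,f^t)$ for regular $R$, your silent restriction to the regular case is the right reading, but it would be worth making the hypothesis explicit.
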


\noindent In this section, we generalize the first part of the above result to an approximate left $n^{th}$-derivative at the F-pure threshold of 
our nonzero element $f$. Let $FPT(f) = c$ be the F-pure threshold of $f$, and denote by $n= d - \sdim(R, f^c)$. We make the following assumption 
on $c$ in this section only.

\begin{center}
\begin{tabular}{| c |}
\hline

Assume that $c$ is a rational number whose denominator is not divisible by $p$.\\

\hline
\end{tabular}
\end{center}

\noindent In particular, the pair $(R, f^c)$ is sharply F-pure \cite{Sch08} and so the splitting prime is proper. Because $p$ does not divide the
denominator of $c$, we can write $c = a/(p^s-1)$ for an appropriate $a$ and $s$. To arrive at an approximation of $c$, we write
\begin{align*}
  c &= \frac{a}{p^s-1}\\
  &= \frac{a(p^{(e-1)s} + p^{(e-2)s} + \cdots + 1)}{(p^s-1)(p^{(e-1)s} + p^{(e-2)s} + \cdots + 1)}\\
  &= \frac{a(p^{(e-1)s} + p^{(e-2)s} + \cdots + 1)}{p^{es}-1}
\end{align*}
Now let $K_e = (p^{(e-1)s} + p^{(e-2)s} + \cdots + 1)$, so that above we have $c = aK_e/(p^{es}-1)$. Define then 
\[
  t_e = \frac{aK_e}{p^{es}}
\]

\noindent and note that $t_e \to c$ as $e \to \infty$. We compute the following limit, which serves as a sort of approximate left 
$n^{th}$-derivative:
\begin{align*}
  \limsup_{t_e \to c} \frac{s(R, f^{t_e})}{(t_e-c)^n} &= \limsup_{e\to\infty}\left(-\frac{p^{es}(p^s-1)}{a}\right)^n\frac{1}{p^{esd}}\ell_R\left(\frac{R}{\m\fpow{es}:\la f\ra^{(p^{es}-1)c}}\right)\\
  &= \left(-\frac{p^s-1}{a}\right)^n \limsup_{e\to \infty}\frac{1}{p^{es(d-n)}}\ell_R\left(\frac{R}{\m\fpow{es}:\la f \ra^{aK_e}}\right)\\
  &= (-c)^{-n} r_F(R, f^c)
\end{align*}

Since $n$ is the smallest integer such that the above limit is nonzero, if $\sdim(R, f^t) < \dim(R) - 1$ then we have $n = \dim(R) - \sdim(R, f^t)
 \ge 2$. Because we know that $s(R, f^t)$ is differentiable almost everywhere, if $FPT(f) = c$ as above and we have $\sdim(R, f^c) < \dim(R) - 1$.
Therefore the left derivative $D_-s(R, f^c) = 0$. {\em The proof in the non-square-free case was suggested by Wenliang Zhang.}

\begin{thm}\label{DerivativeThm} Suppose $f \in R$ and $(R, f^c)$ is sharply F-pure for $c < 1$. Write $f = uf_1^{n_1}\cdots f_r^{n_r}$, where $u$ 
  is a unit and each $f_i$ is irreducible. If $f$ is not square free, assume that $FPT(f_i)=c_i<1$ for each $i$ such that $n_i \ge 2$. Then the 
  left derivative $D_-s(R, f^c) = 0$.
\end{thm}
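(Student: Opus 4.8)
Recall that throughout this section $c = FPT(f)$, so that $s(R, f^t)$ vanishes for $t > c$ and hence, by continuity, $s(R, f^c) = 0$. The plan is to reduce the statement to the single inequality $\sdim(R, f^c) \le d - 2$, where $d = \dim R$. Granting this, set $n = d - \sdim(R, f^c) \ge 2$. Since $s(R, f^t)$ is convex, the left derivative $D_- s(R, f^c) = \lim_{e \to \infty} s(R, f^{t_e})/(t_e - c)$ exists and is finite (the sequence $t_e$ increases to $c$). If this limit were some $L \ne 0$, then
\[
  \frac{s(R, f^{t_e})}{(t_e - c)^n} = \frac{s(R, f^{t_e})}{t_e - c}\,(t_e - c)^{1 - n} \longrightarrow \pm\infty
\]
because $n \ge 2$ and $t_e \to c$, contradicting the finiteness of $\limsup_e s(R, f^{t_e})/(t_e - c)^n = (-c)^{-n} r_F(R, f^c)$ computed above. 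Therefore $L = 0$, which is the desired conclusion $D_- s(R, f^c) = 0$.

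It remains to prove $\sdim(R, f^c) \le d - 2$. By [\cite{BST11} Theorem 4.2] we have $\sdim(R, f^c) = \dim(R/P)$, where $P$ is the splitting prime of $(R, f^c)$; by Lemma \ref{splittingPrime} and sharp F-purity, $P$ is a proper prime, and $s(R, f^c) = 0$ already forces $\dim(R/P) < d$, i.e. $\hgt P \ge 1$. I would rule out $\hgt P = 1$ by contradiction. Since $R$ is a regular local domain, hence a UFD, a height-one prime has the form $P = \la g \ra$ with $g$ irreducible, and the defining containment $f^{\lceil c(p^e - 1)\rceil} P \sseq P\fpow{e}$ reads $g^{p^e - 1} \mid f^{\lceil c(p^e - 1)\rceil}$ for every $e$. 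As $R$ is a UFD this forces $g$ to be associate to one of the irreducible factors $f_i$; comparing $f_i$-adic orders gives $p^e - 1 \le n_i \lceil c(p^e - 1)\rceil$ for all $e$, and letting $e \to \infty$ yields $c \ge 1/n_i$.

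The contradiction now splits on $n_i$. When $n_i = 1$ the bound already says $c \ge 1$, contrary to $c < 1$; this disposes of the square-free case outright. When $n_i \ge 2$, I would invoke the upper bound $c = FPT(f) \le FPT(f_i^{n_i}) = FPT(f_i)/n_i = c_i/n_i$, which comes from $f_i^{n_i} \mid f$ together with the scaling $FPT(g^m) = FPT(g)/m$; combined with $c \ge 1/n_i$ this gives $c_i \ge 1$, contradicting the hypothesis $c_i < 1$. Either way there is no height-one splitting prime, so $\hgt P \ge 2$ and $n \ge 2$. I expect the genuine obstacle to be exactly this last case: the inequality $c \ge 1/n_i$ is harmless on its own once $n_i \ge 2$, and closing the gap really requires both the structural hypothesis $FPT(f_i) < 1$ and the threshold scaling $FPT(f_i^{n_i}) = c_i/n_i$, in contrast to the square-free case, where height-one splitting primes are excluded immediately.
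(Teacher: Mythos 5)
Your proposal is correct, and it reaches the conclusion by the same overall skeleton as the paper — reduce to showing the splitting prime $P$ has $\hgt(P) \ge 2$, so that $n = d - \sdim(R, f^c) \ge 2$, and then use the finiteness of the approximate $n^{th}$-order left derivative $\limsup_e s(R, f^{t_e})/(t_e - c)^n = (-c)^{-n} r_F(R, f^c)$ to kill the first-order left derivative — but the key step of excluding height-one splitting primes is executed by a genuinely different mechanism. The paper proves Lemma \ref{localizationLemma} (splitting primes localize), passes to the DVR $R_P$, and uses the section's standing assumption that the denominator of $c$ is prime to $p$ to produce infinitely many $e$ with $c(p^e-1) = r_e \in \ZZ$ and $r_e + 1 < p^e$ (resp.\ $cn_i(p^e-1)+1 < p^e$ when $n_i \ge 2$), exhibiting an explicit failure of the containment $f_P^{r_e}(PR_P) \sseq (PR_P)\fpow{e}$. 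You instead stay in $R$, use that a regular local ring is a UFD to write $P = \la g \ra$ with $g$ prime, read the defining containment as the divisibility $g^{p^e-1} \mid f^{\lceil c(p^e-1)\rceil}$, and extract $c \ge 1/n_i$ from ceiling estimates as $e \to \infty$; the contradictions with $c < 1$ (when $n_i = 1$) and with $c_i < 1$ via $c \le FPT(f_i^{n_i}) = c_i/n_i$ (when $n_i \ge 2$) then fall out arithmetically. Your route buys several things: it needs no localization lemma; the height-one exclusion does not use integrality of $c(p^e-1)$ at all (the boxed rationality assumption enters only through the $\limsup$ computation, which both arguments require); it makes explicit the justification — implicit in the paper — for why a height-one $P$ must equal $\la f_i \ra$ for some $i$; and it explicitly rules out $\hgt(P) = 0$ using $s(R, f^c) = 0$, a case the paper passes over silently. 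What the paper's route buys is the localization lemma itself, a statement of independent interest, and a contradiction that is "constructive" in the sense of displaying a specific $e$ at which the splitting-prime property fails, rather than an asymptotic inequality. Both arguments lean on the same standard F-pure threshold facts (monotonicity under divisibility and $FPT(g^m) = FPT(g)/m$), which the paper invokes without comment as ``$c \le c_i/n_i$,'' so your use of them is on equal footing.
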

\begin{proof} Let $P$ be the splitting prime of the pair $(R, f^c)$. I claim that with the hypotheses of the theorem, we have $\dim(R/P) < \dim(R)-1$. Because $(R, f^c)$ is sharply F-pure, this implies that $P$ is proper and so prime. Towards a contradiction assume that $\hgt(P) = 1$. We will
 need the following lemma:

\begin{lem}\label{localizationLemma} If $P$ is the splitting prime of $(R, f^c)$ then $PR_P$ is the splitting prime of $(R_P, f_P^c)$ where $f_P$ 
is the image of $f$ in $R_P$.\end{lem}
\begin{proof}[Proof of lemma \ref{localizationLemma}]
  $PR_P$ satisfies $f_P^{\lceil c(p^e-1) \rceil}(PR_P) \sseq (PR_P)\fpow{e}$ by definition of $P$. Also, prime ideals of $R_P$ are in bijective 
  correspondence with primes of $R$ contained in $P$; thus because $PR_P$ is maximal in $R_P$, it must be the splitting prime of $(R_P, f_P^c)$. 
\end{proof}

We now consider two cases: either $f$ is square free, or $FPT(f_i) = c_i < 1$ for all $i$ such that $n_i \ge 2$. Because $\hgt(P) = 1$, this 
implies that $P = \la f_i\ra$ for some $i$. Localize at $P$ and suppose that $f$ is square-free or $n_i = 1$. This tells us that $PR_P = fR_P$, 
since $R_P$ is a DVR with maximal ideal $PR_P$ and $f_P$ cannot be a unit, since $f_P = (uf_1^{n_1}\cdots f_i^1 \cdots f_r^{n_r})_{\la f_i \ra} = vf_i$,
 where $v \in R_P$ is a unit and the image of $f_i$ generates the maximal ideal. Note that by assumption that $c <1$ is a rational number whose 
denominator is not divisible by $p$, there exist infinitely many $e$ such that $c(p^e-1)$ is an integer $r_e$, and each such $r_e$ satisfies 
that $c(p^e-1) < r_e+1 < (p^e-1)+1$. This implies that
\begin{equation*}
  f_P^{r_e}PR_P \not\sseq (PR_P)\fpow{e} 
\end{equation*}
contradicting that $PR_P$ is the splitting prime of $R_P$. 

Suppose then that $P = \la f_i \ra$ and $n_i\ge 2$, and recall that by assumption $FPT(f_i) = c_i <1$. This gives that $c \le c_i/n_i < 1/n_i$, so 
$cn_i(p^e-1)+1 \le c_i(p^e-1)+1 < p^e$. The same argument as the previous case leads to a contradiction of lemma \ref{localizationLemma}. Thus, 
$\hgt(P) \ge 2$ and by Theorem 1.2, we have
\[
  \dim(R/P) = \sdim(R, f^c)
\]
so $\dim(R/P) < \dim(R)-1$, implying that $n \ge 2$ and so $D_-s(R, f^c) = 0$. 
\end{proof}

It follows immediately that because $s(R, f^r) = 0$ for all $r \ge c$, $D_+s(R, f^c) = 0$ and so the F-signature is differentiable at $c = FPT(f)$
 whenever $c<1$ is a rational number whose denominator is not divisible by $p$. In the next two sections, we will see examples where the result is
 false when $p$ divides the denominator of $c$.


\section{F-Signature of Homogeneous Polynomials in Two Variables}
We turn our attention now to the case when $R = k[x,y]_{\la x, y \ra}$ and let $f \in R$ be a product of $r\ge 2$ distinct linear forms with 
$FPT(f) = c$. Here we relax the condition of the previous section that if $c$ is rational, then $p$ does not divide the denominator. By the exact 
sequence
\begin{equation}\begin{CD}
  0 @>>> \dfrac{R}{\m\fpow{e}:f^a} @>>> \dfrac{R}{\m\fpow{e}} @>>> \dfrac{R}{\m\fpow{e}+\la f^a \ra} @>>> 0 
\end{CD}\label{sesFsigHomog}\end{equation}
we have that $s(R, f^{a/p^s}) = 1 - \frac{1}{p^{2s}}\ell_R(R/\la x^{p^s}, y^{p^s}, f^a\ra)$. This length has been studied extensively by P. Monsky and 
P. Teixeira in their work on $p$-fractals. We can use theorems found in \cite{Teix02} and \cite{Mon06} to obtain the following result: 

\begin{thm} The F-signature of the pair $s(R, f^t)$ where $f$ is the product of at least two distinct linear factors converges uniformly on the 
interval $[0, c]$ to the polynomial $\frac{r^2}{4}t^2 - rt + 1$ as $p \to \infty$.\end{thm}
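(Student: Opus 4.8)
The plan is to reduce the statement to a single uniform estimate on Monsky's syzygy gap, and then to invoke the boundedness of that gap for products of distinct linear forms. Writing $q = p^s$ and $t = a/q$, the formula preceding the theorem already gives $s(R, f^{a/p^s}) = 1 - \frac{1}{q^2}\ell_R\!\left(R/\la x^q, y^q, f^a\ra\right)$, so everything rests on computing this colength. Since $R/\la x^q, y^q, f^a\ra$ has finite length and $R$ is two-dimensional regular local, the Hilbert--Burch theorem supplies a minimal free resolution
\[
0 \to R(-\delta_1)\oplus R(-\delta_2) \to R(-q)\oplus R(-q)\oplus R(-ra) \to R \to R/\la x^q,y^q,f^a\ra \to 0,
\]
where $\deg f = r$. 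Comparing Hilbert series forces $\delta_1 + \delta_2 = 2q + ra$, so the only remaining invariant is the syzygy gap $\delta := \delta_2 - \delta_1 = \delta(x^q, y^q, f^a)$ studied by Monsky and Teixeira.

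First I would carry out the routine Hilbert-series bookkeeping. Writing $\delta_{1,2} = \tfrac12(2q+ra)\mp\tfrac12\delta$ and evaluating the length as $\tfrac12 N''(1)$ for the numerator $N(T) = 1 - 2T^q - T^{ra} + T^{\delta_1} + T^{\delta_2}$ (which is divisible by $(1-T)^2$ precisely because the length is finite) yields
\[
\ell_R\!\left(R/\la x^q,y^q,f^a\ra\right) = qra - \frac{r^2 a^2}{4} + \frac{\delta^2}{4}.
\]
Dividing by $q^2$ and substituting into the formula above gives the exact identity
\[
s(R, f^{a/p^s}) = \left(\frac{r^2}{4}t^2 - rt + 1\right) - \frac{\delta(x^q,y^q,f^a)^2}{4q^2}.
\]
Thus the deviation of $s(R, f^t)$ from the target quadratic is governed entirely by $\delta^2/(4q^2)$, and the whole theorem becomes the assertion that this term tends to $0$ uniformly in $t$ as $p \to \infty$.

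The heart of the argument is therefore a bound on the syzygy gap. The key observation is that the distinct roots of the form $x^q y^q f^a$ are exactly those of $xyf$, of which there are at most $r+2$, independently of $q$ and $a$. This is precisely the regime controlled by Monsky's Mason-type theorem for syzygy gaps \cite{Mon06} together with Teixeira's analysis of the Frobenius recursion for $p$-fractals \cite{Teix02}: for $f$ a product of distinct linear forms there is a constant $C = C(f)$, depending only on the number of distinct factors and not on $p$, $q$, or $a$, with $\delta(x^q, y^q, f^a) \le C$. Granting this, for every $t = a/p^s \in [0,c]$ with $s \ge 1$ we obtain
\[
\left| s(R, f^t) - \left(\frac{r^2}{4}t^2 - rt + 1\right)\right| = \frac{\delta^2}{4q^2} \le \frac{C^2}{4p^{2s}} \le \frac{C^2}{4p^{2}}.
\]
Because such rationals $a/p^s$ are dense in $[0,c]$ and both $s(R,f^t)$ (continuous by \cite{BST112}) and the polynomial are continuous, this bound on a dense set extends to all of $[0,c]$; hence $\sup_{t\in[0,c]}\bigl|s(R,f^t) - (\tfrac{r^2}{4}t^2 - rt + 1)\bigr| \le C^2/(4p^2) \to 0$, which is exactly the claimed uniform convergence.

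The main obstacle is the uniform boundedness of $\delta(x^q, y^q, f^a)$. The subtlety is that $x^q, y^q, f^a$ are $p$-th powers, which is exactly the exceptional case in which the characteristic-$p$ version of Mason's theorem can fail; controlling $\delta$ here is the technical core supplied by the syzygy-gap and $p$-fractal machinery of \cite{Mon06} and \cite{Teix02}. Obtaining a bound uniform in the prime $p$ is the delicate point, since it must hold simultaneously for all $t$ in an interval $[0,c]$ whose right endpoint $c = FPT(f)$ itself varies with $p$ (approaching $2/r$, the unique root of the limiting quadratic). Everything else is the elementary Hilbert-series computation above.
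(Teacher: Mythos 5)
Your reduction of the theorem to a single estimate on the syzygy gap is exactly the paper's strategy, and your Hilbert--Burch/Hilbert-series bookkeeping correctly reproduces Teixeira's length formula $\ell_R\big(R/\la x^q, y^q, f^a\ra\big) = qra - \tfrac{1}{4}(ra)^2 + \tfrac{1}{4}\delta^2$. But the key estimate you then invoke is false: there is \emph{no} constant $C = C(f)$, independent of $q$ and $a$, with $\delta(x^q, y^q, f^a) \le C$. The paper's own example refutes it: for $f = xy(x+y)$ and $p \equiv 2 \bmod 3$, with $b_e = (p - \tfrac{p+1}{3})p^{e-1}$ and $c = b_e/p^e$, one computes $\delta_f(c) = 1/p$, i.e.\ $\delta(x^{p^e}, y^{p^e}, f^{b_e})$ grows like $p^{e-1}$, which is unbounded as $e \to \infty$ for fixed $p$. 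More structurally, if your constant bound held, the normalized limit $\delta_f(t) = \lim_e \delta/p^e$ would vanish identically and $s(R, f^t)$ would \emph{equal} the quadratic for every $p$; this contradicts $s(R, f^c) = 0 \ne g(c) = \tfrac{1}{4p^2}$ in that same example. So the bound you attribute to the machinery of \cite{Mon06} and \cite{Teix02} is not what that machinery provides, and this is precisely the ``delicate point'' your last paragraph defers.

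What Monsky's Theorem 11 actually gives (and what the paper uses) is a bound \emph{linear in} $q$: if $ra \le 2p^e$, then $\delta(x^{p^e}, y^{p^e}, f^a) \le (r-2)p^{e-1} = (r-2)q/p$. The gap is small only after dividing by $q$: $\delta/q \le (r-2)/p$, hence $\delta^2/(4q^2) \le (r-2)^2/(4p^2) \to 0$ as $p \to \infty$, uniformly in $t$. This lands on the same final display you wrote, with $C = r-2$, so the remainder of your argument (density of the points $a/p^s$ in $[0,c]$ together with continuity of $s(R, f^t)$, or equivalently Teixeira's continuous extension $\delta_f$) goes through verbatim once the bound is corrected. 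One further point you must verify: Monsky's theorem carries the hypothesis $ra \le 2p^e$, and you need it throughout $[0, c]$. The paper checks this by observing that every monomial of $f^a$ has degree at least $ra/2$ in $x$ or in $y$, so $f^a \notin \m\fpow{e}$ (which holds whenever $s(R, f^{a/p^e}) > 0$, in particular for $a/p^e < c$) forces $ra/2 \le p^e$.
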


\begin{proof}
The Hilbert Syzygy Theorem implies that the module of syzygies between $x^{p^e}, y^{p^e}$ and $f^a$ can be generated by two homogeneous elements of 
degrees $m_1 \ge m_2$. Their difference $\delta = m_1 - m_2$ is called the {\bf syzygy gap} of $(x^{p^e}, y^{p^e}, f^a)$. If we need to consider more
 than one triplet $(x^{p^e}, y^{p^e}, f^a)$ we will write this syzygy gap as $\delta(x^{p^e}, y^{p^e}, f^a)$. Theorem 2.10 in \cite{Teix02} tells us 
that
\[
   \ell_R(R/\la x^{p^e}, y^{p^e}, f^a\ra) = \frac{1}{4}(4rap^e - (ra)^2) + \frac{\delta^2}{4}
\]
Also in his thesis \cite{Teix02}, Teixeira showed the functions $\frac{a}{p^e} \mapsto \frac{1}{p^{2e}}\ell_R(R/\la x^{p^e}, y^{p^e}, f^a\ra)$ and 
$\frac{a}{p^e} \mapsto \frac{1}{p^e}\delta(x^{p^e}, y^{p^e}, f^a)$ defined on $[0, 1] \cap \mathbb{Z}[p^{-1}]$ can be extended uniquely to continuous
 functions on $[0, \infty)$. These extended functions are denoted $\phi_f(t)$ and $\delta_f(t)$ respectively. 

In \cite{Mon06} Monsky proved an upper bound for $\delta(x^{p^e}, y^{p^e}, f^a)$ in the case when $f$ is homogeneous of degree $\ge 2$. 

\begin{thm}[\cite{Mon06} Theorem 11] Let $l_1, \dots, l_r$ be linear forms such that $l_i$ and $l_j$ share no common non-unit factor for $i \ne j$
  and $r \ge 2$. Suppose $0 \le a_1, \dots, a_r \le p^e$ and he $a_i$ satisfy the inequalities $2a_i \le \sum_1^r a_j \le 2p^e$. Then 
  $\delta(x^{p^e}, y^{p^e}, \Pi_1^r l_i^{a_i}) \le (r-2)p^{e-1}$. 
\end{thm}

In our case, this theorem tells us that for $f$ the product of $r \ge 2$ distinct linear forms and $ra \le 2p^e$ then
\[
\delta(x^{p^e}, y^{p^e}, f^a) \le (r-2)p^{e-1}.
\]
Rearranging $ra \le 2p^e$, we see that if $(ra)/2 \le p^e$, then Monsky's bound holds. Note that each term in $f^a$ has degree in $x$ or degree in
 $y$ at least $(ra)/2$, so if $f^a \not\in \m\fpow{e}$ then $(ra)/2 \le p^e$. Now, we remember the exact sequence \eqref{sesFsigHomog} to compute
\begin{align*}
  s(R, f^{a/p^e}) &= 1 - \frac{1}{p^{2e}}\ell_R\left(\frac{R}{\la x^{p^e}, y^{p^e}, f^a\ra}\right)\\
  &= 1 - \frac{1}{4p^{2e}}(4rap^e - r^2a^2 + \delta^2)\\
  &= \frac{r^2}{4}\left(\frac{a}{p^e}\right)^2 - r\left(\frac{a}{p^e}\right) + 1 - \frac{\delta^2}{4p^{2e}}
\end{align*}
Extending $s(R, f^{a/p^e})$ to $[0, \infty)$ we get that
\[
  s(R, f^t) = \frac{r^2}{4}t^2 - rt + 1 - \left(\frac{\delta_f(t)}{2}\right)^2
\]
and Monsky's upper bound for $\delta(x^{p^e}, y^{p^e}, f^a)$ shows that as $p \to \infty$, $\delta_f(t) \to 0$ for $t < c$ and so the F-signature 
converges uniformly to $\frac{r^2}{4}t^2 - rt + 1$ on the interval $[0, c]$.
\end{proof}

\begin{quest} Is there some geometric significance to the quadratic polynomial to which the F-signature converges with respect to resolution of 
singularities?\end{quest}

To finish this section, we use the above method to compute the limiting quadratic polynomial of the F-signature for three distinct lines in the 
plane, which we may assume is given by $f = xy(x+y)$. Furthermore, we can compute not only the exact value of $c$ in characteristic $2 \mod 3$, but
 also the left derivative of this function at $c$ in this case. Because we show that the denominator of $c$ is always divisible by $p$ when 
$p \equiv 2 \mod 3$, the results of the previous section regarding approximate higher-order left derivatives do not apply. 

\begin{exa} Let $f = xy(x+y)$ and suppose that $k$ has characteristic $p\ge 5$ congruent to $2 \mod 3$. Define 
$b_e = \left(p - \frac{p+1}{3}\right)p^{e-1}$ so that $b_e/p^e = \frac{2}{3} - \frac{1}{3p}$. A straightforward calculation using Lucas' theorem 
shows that for all $e \in \NN$, we have that $f^{b_e} \in \m\fpow{e}$ but $f^{b_e - 1} \not\in \m\fpow{e}$. Therefore,
\begin{align*}
  \ell_R\big(R/(\m\fpow{e}: f^{b_e})\big) &= 0\\
  \ell_R\big(R/(\m\fpow{e}: f^{b_e - 1})\big) &\ne 0
\end{align*}
and so we conclude that the F-pure threshold of $xy(x+y)$ is $c = b_e/p^e = \frac{2}{3}-\frac{1}{3p}$ in this case. The above-proven limiting 
polynomial of $s(R, f^t)$ is $g(t) = \frac{9}{4}t^2 - 3t + 1$ and $g(c) = \frac{1}{4p^2}$. Because $s(R, f^c) = 0$ but $g(c) \ne 0$, we can 
directly compute $\delta_f(c)$:
\begin{align*}
  \left(\frac{\delta_f(c)}{2}\right)^2 &= g(c) - s(R, f^c)\\
  &= \frac{1}{4p^2}
\end{align*}
so then $\delta_f(c) = \frac{1}{p} = \frac{(3-2)p^{e-1}}{p^e}$ so by Monsky's bound, we have that $\delta_f(c)$ achieves a local maximum at $c$.
\end{exa}

\begin{exa} We can also provide an affirmative answer to what the left derivative is at $FPT(f) = c$ in this case. Note that the denominator is 
divisible by $p$, so we cannot apply Theorem \ref{DerivativeThm} from the previous section. We return again to the work of Teixeira, who proves 
(\cite{Teix10}, Theorem II) that if $\delta_f$ achieves a local maximum at $u \in [0, 1]$ then for all $t \in [0, 1]$ such that 
$3|t - u| \le \delta_f(u)$, we have that
\[
  \delta_f(t) = \delta_f(u) - 3|t - u|
\]
This implies that $\delta_f(t)$ is piecewise linear near $c$ so we can apply the techniques of calculus to take the left derivative at $c$:
\begin{align*}
  \frac{9}{4}t^2 - 3t + 1 - \left(\frac{\delta_f(t)}{2}\right)^2 &= \left(\frac{3}{2}t-1\right)^2 - \left(\frac{\delta_f(t)}{2}\right)^2\\
  &= \left(\frac{3}{2}t-1 + \frac{\delta_f(t)}{2}\right)\left(\frac{3}{2}t-1 - \frac{\delta_f(t)}{2}\right)
\end{align*}
now applying the product rule and substituting $c = \frac{2p-1}{3p}$, we have:
\begin{align*}
  D_-s(R, f^c) &= \left(\frac{3}{2}-\frac{1}{2}D_- \delta_f\left(\frac{2p-1}{3p}\right)\right)\left(\frac{3}{2}\left(\frac{2p-1}{3p}\right)-1 + \frac{1}{2}\delta_f\left(\frac{2p-1}{3p}\right)\right)\\
  & + \left(\frac{3}{2}+\frac{1}{2}D_-\delta_f\left(\frac{2p-1}{3p}\right)\right)\left(\frac{3}{2}\left(\frac{2p-1}{3p}\right)-1 - \frac{1}{2}\delta_f\left(\frac{2p-1}{3p}\right)\right)\\
  &= \left(\frac{3}{2}-\frac{1}{2}(3)\right)\left(1-\frac{1}{2p} -1 + \frac{1}{2p}\right) + \left(\frac{3}{2} + \frac{1}{2}(3)\right)\left(1-\frac{1}{2p} - 1 - \frac{1}{2p}\right)\\
  &= -\frac{3}{p}
\end{align*}
 To complete this computation, we used that $D_-\delta_f(c) = 3$. We can see this by recalling Theorem II from \cite{Teix10}, which tells us for 
 $t$ sufficiently close to $c$, we have $\delta_f(t) = \delta_f(c) - 3|t - c|$. This gives that the left derivative at $c$ is $3$.
\end{exa}

\noindent Note that this computation of $D_-s(R, f^c) = -\frac{3}{p} \ne 0$ in contrast to Theorem \ref{DerivativeThm} in the previous section, 
where it was shown that when $p$ does not divide the denominator of $c$ the left derivative $D_-s(R, f^c) = 0$.


\section{Computational Examples}
In this section, we will use the computational algebra package Macaulay2 to explicitly compute the F-signature of pairs for several polynomials 
and graph the data obtained using gnuplot. For the first example, we analyze the cusp $C$ and provide an affirmative answer for the left derivative
 of the F-signature at $FPT(C)$ in characteristics 5, 11, and 17. We also explicitly compute the F-signature function for three and four distinct
 linear forms in various characteristics using Macaulay2, and graph them with the quadratic limiting polynomials for the F-signature functions in
 these cases. All examples rely on routines defined in the next section.

\begin{exa} {\em (The cusp in characteristic 5, 11, and 17)} Let $C = y^2 - x^3$ be the cuspidal cubic. \end{exa}
It is known that whenever characteristic $p \equiv 2 \mod 3$, the F-pure threshold of $C$ is $\frac{5}{6} - \frac{1}{6p}$. If $6$ divides $p+1$ we
 define $b_e = (p - \frac{p+1}{6})p^{e-1}$ so that $b_e/p^e = \frac{5}{6} - \frac{1}{6p}$. In this example, we will use Macaulay2 to compute 
$s(R, C^{(b_e-1)/p^e})$ in characteristics $p = 5, 11, $ and $17$ for $e = 2$ and $3$ using a function defined in section 5 of this paper. The code 
below will compute the value of the function for $p = 5$ and $e = 2$ (so that $b_e-1 = 19$); by changing the base ring, value of $e$, and $b_e$ 
appropriately, we may use this same code for other $p$ and $e$ to obtain the corresponding values. Here {\tt Fsig} is a Macaulay2 routine defined 
explicitly in the next section; it computes $s(R, C^{a/p^e})$ for a single value of $a/p^e$.

\begin{framed}
\begin{tabular}{l}
	{\tt:R = ZZ/5[x,y]}\\
	{\tt:C = y\verb|^|2-x\verb|^|3}\\
	{\tt:Fsig(2, 19, C)}
\end{tabular}
\end{framed}

\noindent The following data was collected using the above code, changing parameters as mentioned above:

\begin{center}
\begin{tabular}{| l | l | l |}

	\hline
	$p$ & $e$ & $s(R, C^{(b_e-1)/p^e})$\\
	\hline
	5 & 2 & 1/125\\
	5 & 3 & 1/625\\
	\hline
	11& 2 & 1/1331\\
	11& 3 & 1/14641\\
	\hline
	17& 2 & 1/4913\\
	17& 3 & 1/83521\\
	\hline
	
\end{tabular}
\end{center}

\noindent So we have that for each of these values, $s(R, C^{(b_e-1)/p^e}) = 1/p^{e+1}$. Using this data, we can compute the derivative of the cusp 
$C$ in these characteristics. Let $p$ be either 5, 11, or 17 and $e$ be 2 or 3. We compute the difference quotient
\[
   \frac{s(R, C^{(b_e-1)/p^e}) - s(R, C^{b_e/p^e})}{(b_e-1)/p^e - b_e/p^e}
\]
and arrive at $-1/p$ for each value of $p$ and $e$. For these computations, notice $b_e/p^e = \frac{5}{6} - \frac{1}{6p}$ so $s(R, C^{b_e/p^e}) = 0$.
This shows that the points  
  $\left(\frac{b_2-1}{p^2}, \frac{1}{p^3}\right)$,
  $\left(\frac{b_3-1}{p^3} , \frac{1}{p^4}\right)$,
  and $\left(\frac{5}{6} - \frac{1}{6p} , 0\right)$
are colinear points on the convex function $s(R, C^t)$. By convexity of $s(R, f^t)$ we have that the F-signature is linear on the interval 
$\left[\frac{b_2-1}{p^2}, \frac{5}{6} - \frac{1}{6p}\right]$. Therefore, we can affirmatively say that the derivative of $s(R, C^t)$ at 
$t = FPT(C)$ is $-1/p$.

\begin{exa}{\em (Four distinct lines in characteristic 29)} Let $k = \mathbb{Z}/29\mathbb{Z}$ and consider $f = xy(x+y)(x+2y) \in k[x,y]$.\end{exa}

\noindent We will use the Macaulay2 functions defined in the next section to generate a graph of the F-signature of $(R, f^t)$ for $0 \le t \le 
\frac{1}{2}$. Here, {\tt GenPlot} is a function that computes the F-signature of $(R, f^t)$ at values of $t$ of the form $0 \le b/p^e \le FPT(f)$ 
for some fixed value of $e$, passed as the first argument.

\begin{framed}
\begin{tabular}{l}
  {\tt:R = ZZ/29[x,y]}\\
  {\tt:f = x*y*(x+y)*(x+2*y)}\\
  {\tt:GenPlot(2, f, "$\sim$/c29e2")}
\end{tabular}
\end{framed}

\noindent Once complete, this operation will compute the length
\[
   1- \frac{1}{29^4}\ell_R\left(\frac{R}{(x^{29^2}, y^{29^2}, f^a)}\right)
\]
for $0 \le a \le 421$ and output these lengths to a file named {\tt c29e2} (so titled for ``characteristic 29, e=2'') which is formatted to be 
graphed by the program gnuplot. We provide these two graphs of the computed F-signature and the limiting polynomial $g(t)$ here. Even at such a 
low characteristic, the two are nearly indistinguishable if plotted simultaneously.

\begin{figure}[h]
\begin{multicols}{2}
\includegraphics[scale = 0.7]{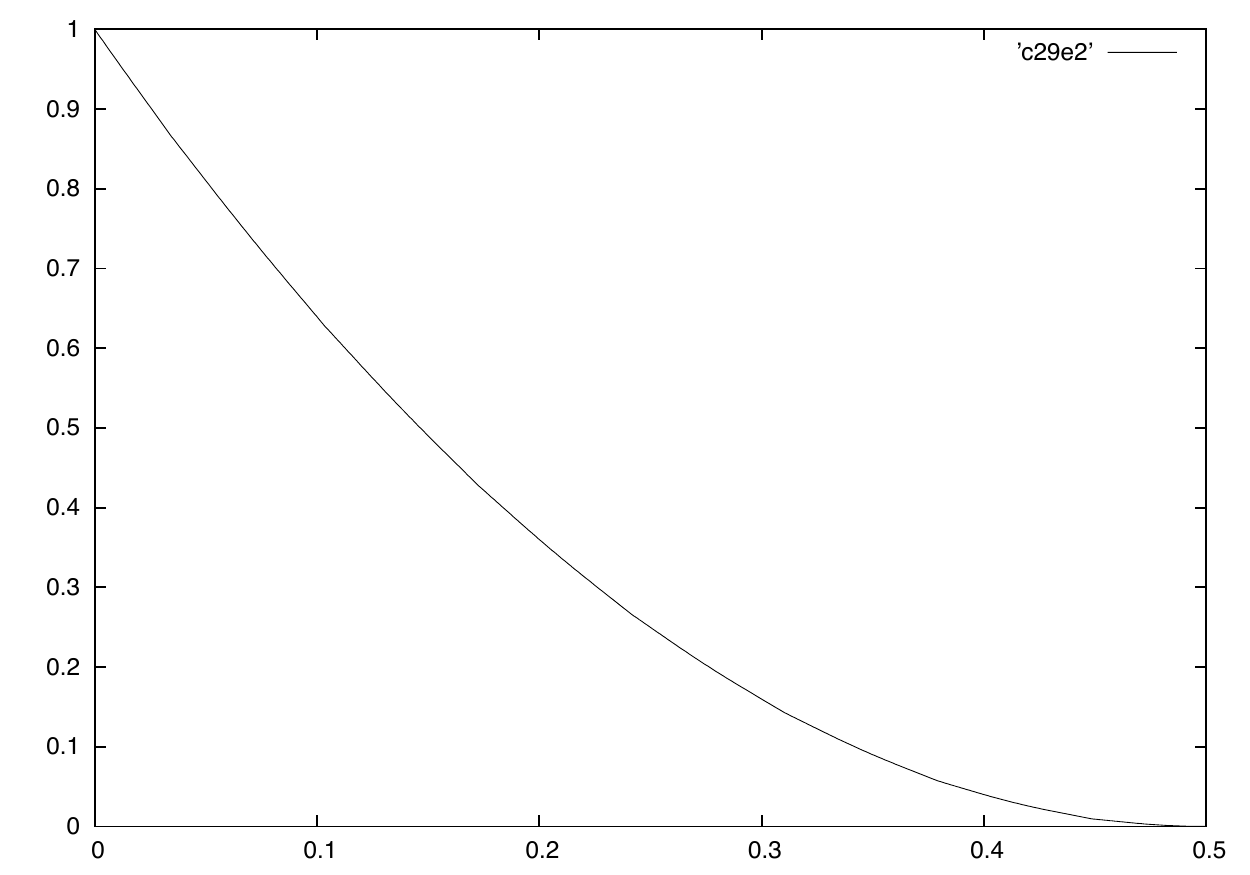}

\includegraphics[scale = 0.7]{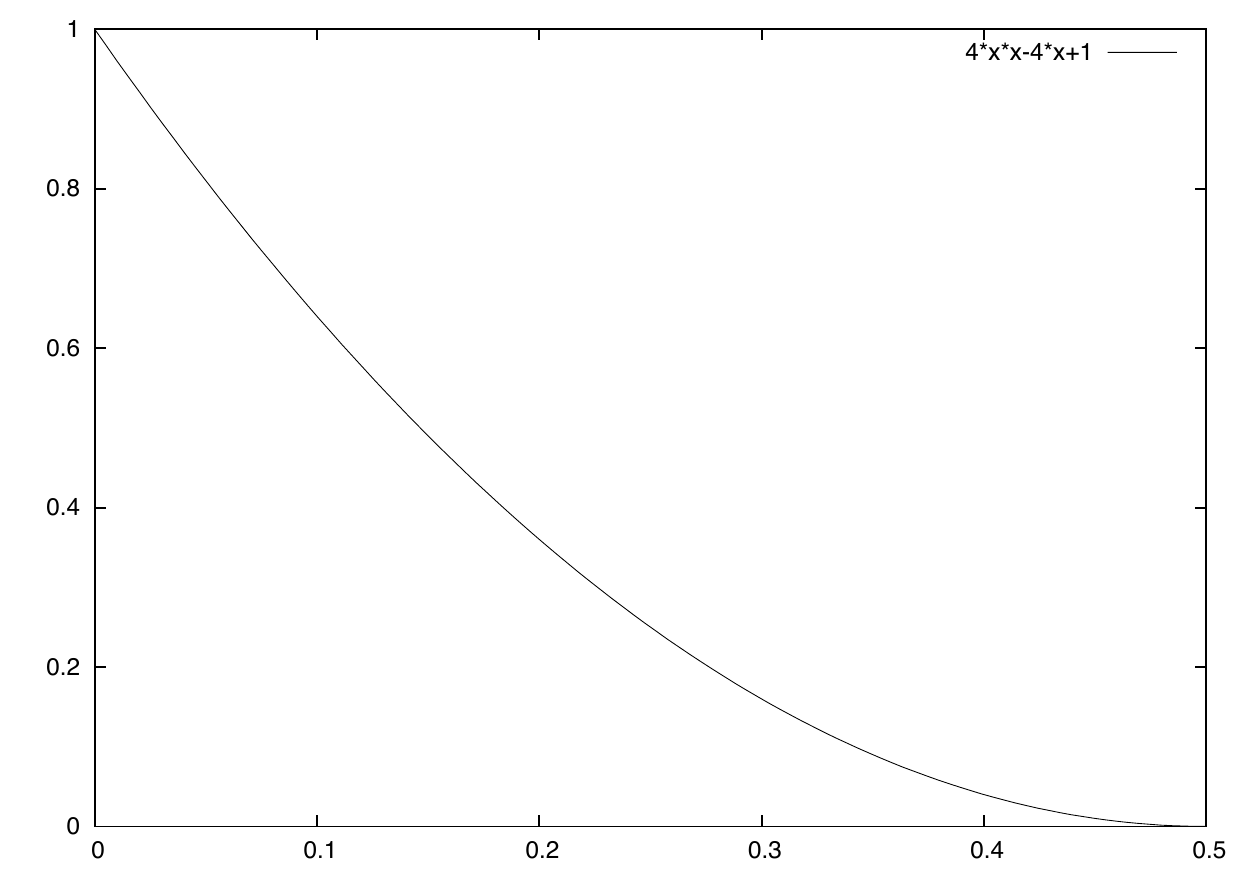}
\end{multicols}
\caption{Let $f = xy(x+y)(x+2y)$ as in example 4. The left picture here is the plot of $s(R, f^t)$ for $0 \le t \le \frac{1}{2}$ generated using 
Macaulay2. The right picture is the plot of $4t^2-4t+1$ on the same interval.}
\end{figure}

\begin{exa} {\em (Three distinct lines in characteristic 5)} Let $f = xy(x+y) \in k[x,y]$ where $k = \mathbb{Z}/5\mathbb{Z}$.\end{exa}  

\noindent Notice that we are in the case of the example at the end of the last section: $f = xy(x+y)$ and characteristic $5 \equiv 2 \mod 3$, so 
we can compute explicitly that the F-pure threshold is $\frac{2}{3}-\frac{1}{15} = \frac{3}{5}$. Using code similar to the above example, we 
generate a plot for the F-signature of $f$, the limiting polynomial, and also provide a plot of $\frac{1}{4}\delta_f(t)^2$ on $[0, \frac{3}{5}]$.

\begin{figure}[ht!]
\begin{multicols}{2}
\includegraphics[scale=0.7]{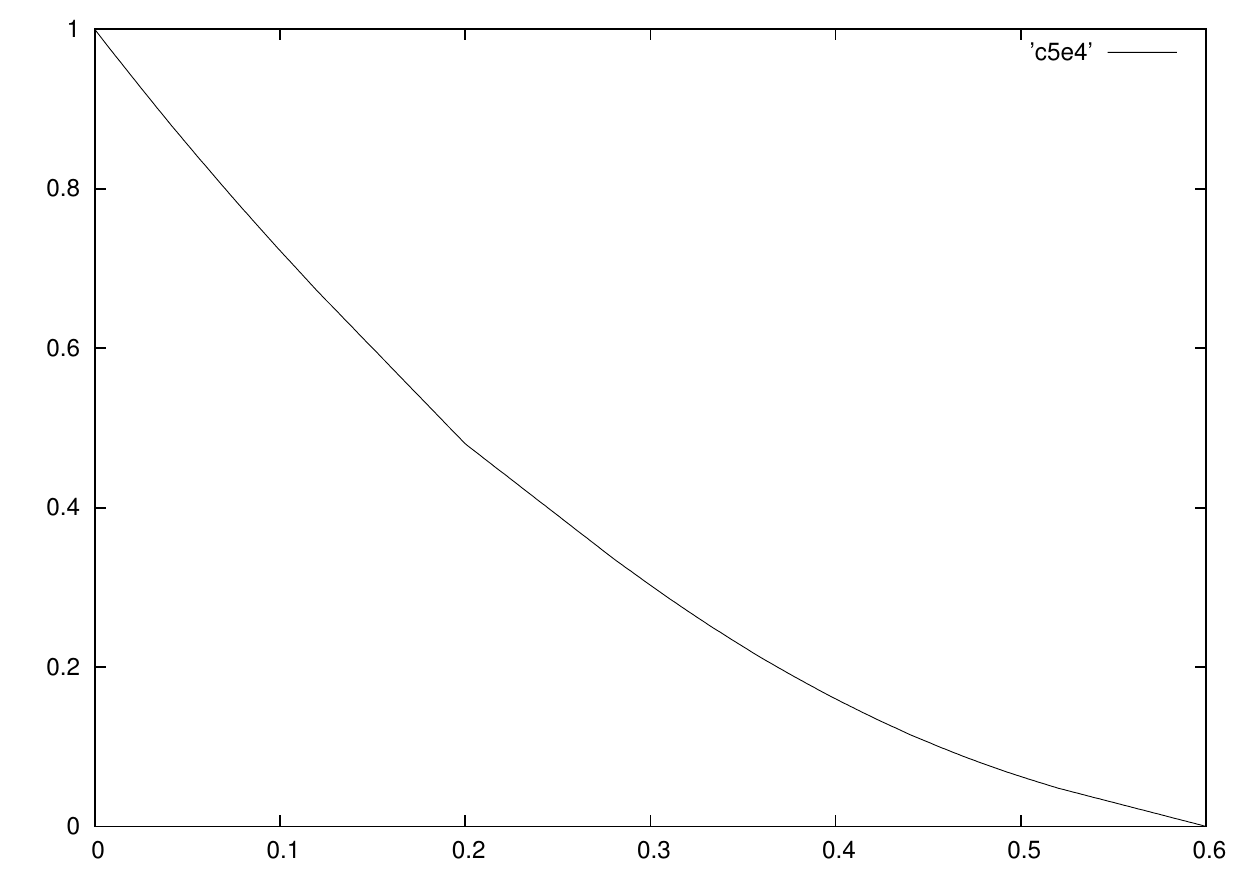}

\includegraphics[scale=0.7]{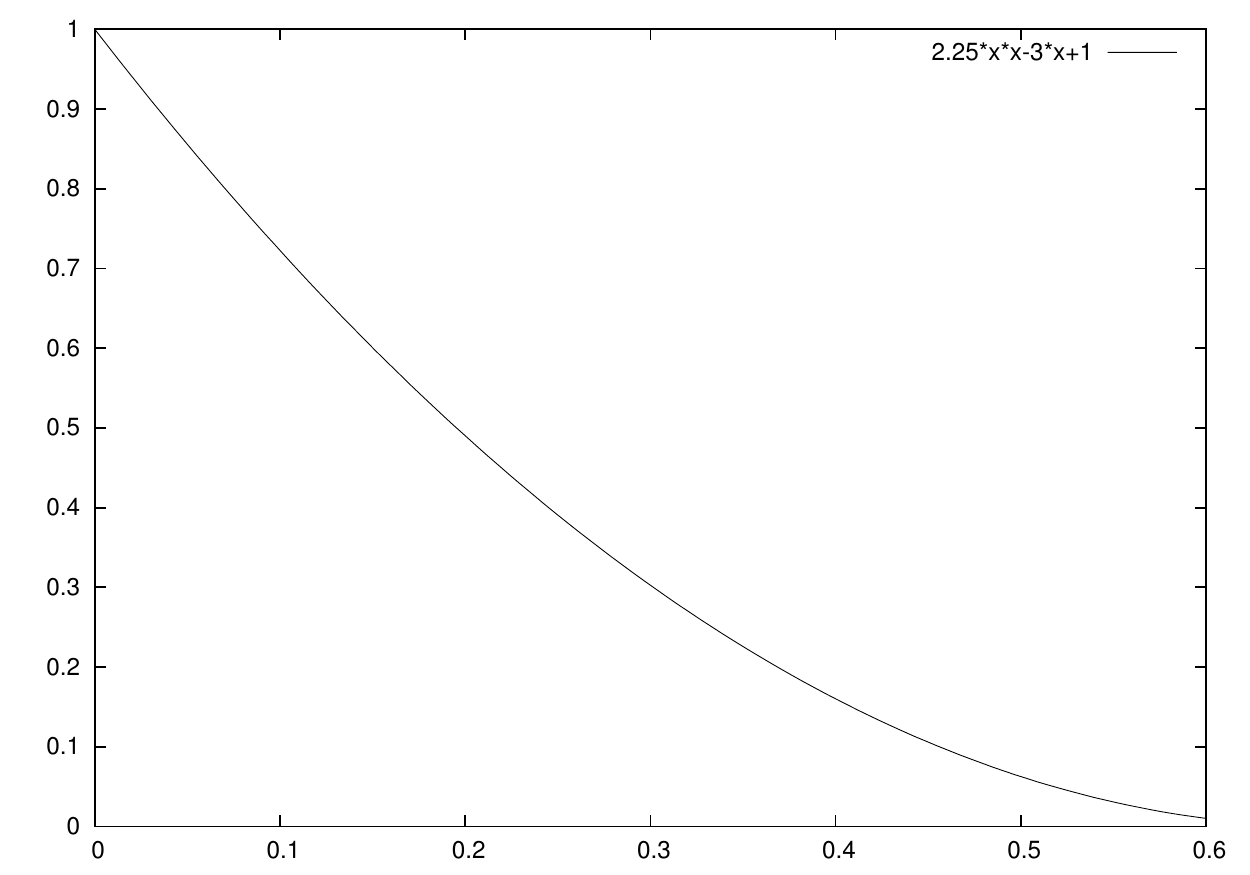}
\end{multicols}
\caption{Let $f = xy(x+y)$ as in example 5. The left picture here is the plot of $s(R, f^t)$ for $0\le t \le \frac{3}{5}$ generated using Macaulay.
 The right picture is the plot of $\frac{9}{4}t^2-3t+1$ on the same interval.}
\end{figure}

\begin{figure}[ht!]
\includegraphics{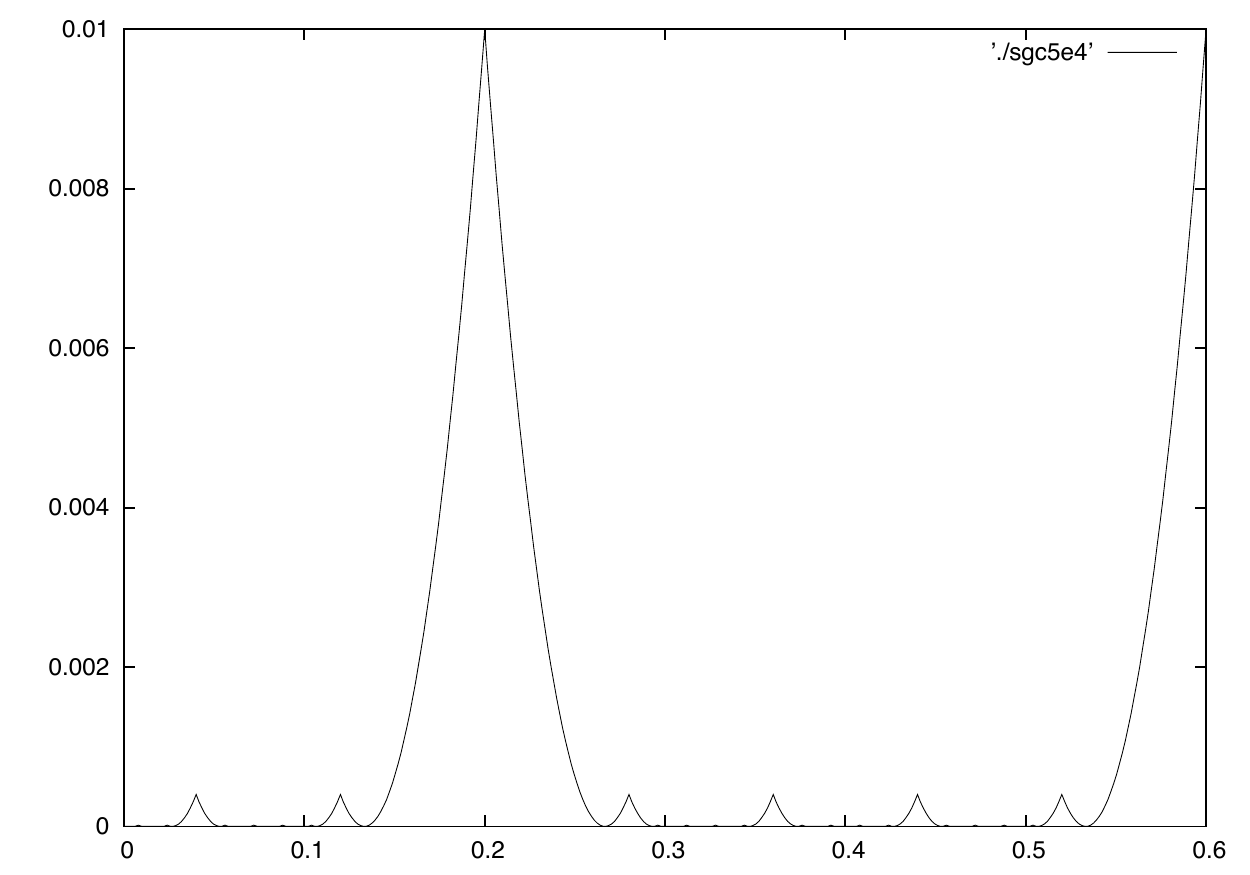}

\caption{This is the plot of the term $\frac{1}{4}\delta_f(t)^2$ on the interval $[0, \frac{3}{5}]$ which was obtained by computing $\frac{9}{4}t^2
-3t+1 - s(R, f^t)$ with $f = xy(x+y)$ as in example 5.}
\end{figure}
\vfill


\newpage

\vspace{36cm}
\section{Routines for Computing F-signatures using Macaulay2}
The results presented here were significantly influenced by experimental data gathered using the computational algebra package Macaulay2 
\cite{M2}. This final section provides the source code for functions referenced in the examples from the previous section. The first function 
defined here accepts an ideal $I$ in a polynomial ring $R$ and returns the $e^{th}$ Frobenius power $I\fpow{e}$.

\begin{framed}
{\tt
\begin{tabular}{l}
fpow = (I, e) ->\\
(\\
  L:= first entries gens I;\\
  p := char ring I;\\
\\
  J:= ideal(L\#0\verb|^|(p\verb|^|e));\\
  for i from 1 to (length L)-1 do\\
  J = J + ideal(L\#i\verb|^|(p\verb|^|e));\\
  J\\
)
\end{tabular}
}
\end{framed}

\noindent This second function returns a single length $1 - \dfrac{1}{p^{ed}}\ell_R\big(R/(x_1^{p^e}, \dots, x_d^{p^e}, f^a)\big)$, where $R$ is a 
polynomial ring in variables $x_1, \dots, x_d$ and $f$ is some polynomial in this ring.

\begin{framed}
{\tt
\begin{tabular}{l}
Fsig = (e, a, f) ->\\ (\\
 R1:=ring f;\\
 p:= char ring f;\\
 I = fpow(ideal(first entries vars R1), e);\\
     1-(1/p\verb|^|(dim(R1)*e))*degree(I+ideal(f\verb|^|a))\\ 
 )\\
\end{tabular}
}
\end{framed}

\noindent We can now build on this function to compute the F-signature of specific polynomials and output these lengths to a file. The first 
function will compute the values of the F-signature for some homogeneous polynomial $f$ (specified as the second argument when the function is 
called) at each value $a/p^e$ ($e$ is specified as the first argument) such that $0 \le a/p^e \le FPT(f)$. This is accomplished by repeatedly 
calling {\tt Fsig(e, a, f)}. The values computed are then written to a file named $fileN$ (the third argument passed to the function) which 
should be enclosed in quotation marks and give the full path name of the file. The data is stored in the correct format for use with the program 
gnuplot to produce images like those found in the previous section and a new window is opened which contains a plot of the data just computed.

\begin{framed}
{\tt
\begin{tabular}{l}
GenPlot = (e, f, fileN) ->\\
(\\
cL = for i from 0 to (char (ring f))\verb|^|e list\\
q := Fsig(e, i, f) do (stdio<<i<<",  "<<q<<endl<<"============="<<endl;\\ if q==0 then break;)\\
\\
fp = toString(fileN)<<\verb|" "|;\\
for i from 0 to (length cL)-1 do\\
fp<<toRR(i/(char (ring f))\verb|^|e)<<\verb|" "|<<toRR(cL\#i)<<endl;\\
fp<<close;\\
\\
fp="plotComm"<<"plot '"<<toString(fileN)<<"' with lines";\\
fp<<close;\\
\\
run "gnuplot -p plotComm";\\
run "rm plotComm";\\
)\\
\end{tabular}\\
}
\end{framed}

\noindent You can find this code on my website: {\tt www.math.unl.edu/$\sim$ecanton2/}.

\bibliographystyle{amsalpha}
\bibliography{refspaper2}
\end{document}